\newtheorem{theorem}{Theorem}[section]
\theoremstyle{definition}
\theoremstyle{remark}
\newtheorem{remark}[theorem]{Remark}
\numberwithin{equation}{section}
\begin{document}

\title{Exactness, Cohomology, and Uniqueness in First-Order Differential Equations}

\author{Hemanta Mandal}
\address{Department of Mathematics, C V Raman Global University, Bhubaneswar, Odisha 752054}

\email{hemanta.infinity@gmail.com}

\subjclass[2020]{Primary 34A05, 58A12; Secondary 34A25, 55N30}

\dedicatory{This paper is dedicated to my mentor Prof. Swadhin Pattanayak}

\keywords{First-order differential equations; de-Rham cohomology; exact differential forms; global solvability; integrating factors; non-simply connected domains; topological obstructions}

\begin{abstract}
This paper investigates the relationship between the solvability of first-order differential equations and the topology of the underlying domain through the lens of \emph{de\,Rham cohomology}. We analyze the conditions under which a closed 1-form associated with a first-order ODE admits a global potential, thereby reducing the problem to the exactness of differential forms. While it is well known that exactness is guaranteed on simply connected domains, we show that \emph{triviality of the first de\,Rham cohomology group} is the more fundamental requirement for \emph{global integrability and uniqueness} of solutions. In particular, we demonstrate that certain \emph{non-simply connected manifolds}, such as the real projective plane $\mathbb{RP}^2$, still support global solutions due to the vanishing of $H^1_{\text{dR}}$. By explicitly constructing examples on $\mathbb{RP}^2$ and comparing them with domains like $\mathbb{R}^2 \setminus \{0\}$, we illustrate how \emph{topological obstructions manifest analytically} as non-uniqueness or multi-valued behavior. We further discuss the correspondence between \emph{integrating factors} and the trivialization of cohomology classes, drawing connections to classical symmetry methods and potential theory. This synthesis of geometric, topological, and analytic perspectives provides a unifying framework for understanding the \emph{global behavior of first-order ODEs} beyond local existence theorems.
\end{abstract}

\maketitle

\section{Introduction}
First-order ordinary differential equations are among the most fundamental mathematical objects, appearing ubiquitously across applied mathematics, physics, and engineering \cite{BD, S}. The standard form of such an equation, expressed as a differential 1-form
\[
\omega = M(x, y)\, dx + N(x, y)\, dy,
\]
invites a geometric interpretation: the solution curves correspond to the integral curves of the distribution defined by the kernel of \( \omega \). A well-known class of solvable equations are those for which \( \omega \) is \emph{exact}, i.e., when there exists a scalar function \( F(x, y) \) such that \( \omega = dF \). In this case, the general solution reduces to \( F(x, y) = C \), representing level sets of a potential function.

Traditionally, the exactness of a 1-form is determined by checking whether it is \emph{closed}---that is, whether \( d\omega = 0 \)---and then, in simply connected domains, appealing to the \emph{Poincar\'e Lemma}, which guarantees that every closed 1-form is exact \cite{BT, W}. In practice, however, many differential equations arise on domains with nontrivial topology, such as punctured planes or annular regions, where the closedness of a form no longer implies its exactness. In such cases, the de\,Rham cohomology group \( H^1_{\text{dR}}(U) \) serves as the appropriate tool to determine whether a global potential function exists \cite{M, V}.

The standard method of integrating factors, in which one seeks a scalar multiplier \( \mu(x, y) \) to render \( \mu \omega \) exact, provides a practical analytic strategy \cite{BD, AB}. Yet, it is inherently local in nature and may fail when topological obstructions are present. The fundamental insight is that integrating a 1-form is only possible globally if its associated cohomology class is trivial \cite{BT}. Hence, the failure of integrating factor methods is not necessarily due to analytical complexity but may reflect deeper topological obstructions.

In this article, we explore this rich intersection of analysis and topology in the setting of first-order ODEs. We revisit classical results with a modern geometric lens and examine how the work of Anco and Bluman \cite{AB} systematically constructs integrating factors using adjoint-symmetry analysis. We reinterpret their results as methods to trivialize cohomology classes under certain structural assumptions. This approach allows us to not only understand the solvability of differential equations but also classify the source of obstructions---whether analytic or topological. Related tools from exterior differential systems \cite{BCGGG} and characteristic cohomology \cite{V} provide a pathway toward generalizations in higher-order systems and partial differential equations.

\section{Preliminaries}

In this section, we review the foundational concepts necessary to formulate the connection between the solvability of first-order differential equations and the topology of the underlying domain. We discuss differential 1-forms, exactness, integrating factors, and the basics of de\,Rham cohomology.

\subsection{Differential 1-Forms and Exactness}

Let \( U \subset \mathbb{R}^2 \) be an open domain. A first-order ordinary differential equation can be written in the form
\[
M(x, y)\,dx + N(x, y)\,dy = 0,
\]
which we interpret as a differential 1-form \( \omega = M\,dx + N\,dy \in \Omega^1(U) \). A 1-form \( \omega \) is said to be \emph{exact} if there exists a smooth scalar function \( F: U \to \mathbb{R} \) such that
\[
\omega = dF,
\]
in which case the general solution of the differential equation is implicitly given by \( F(x, y) = C \), for constant \( C \). 

\subsection{Closed Forms and the Poincaré Lemma}

A 1-form \( \omega \) is said to be \emph{closed} if its exterior derivative vanishes:
\[
d\omega = 0 \quad \iff \quad \frac{\partial M}{\partial y} = \frac{\partial N}{\partial x}.
\]
The classical Poincaré Lemma states that every closed form is locally exact. Moreover, if the domain \( U \) is simply connected, then every closed 1-form is globally exact. Therefore, the existence of a potential function is closely related to the topology of the domain.

\subsection{Integrating Factors}

When a 1-form \( \omega \) is not exact, one often seeks a nonvanishing scalar function \( \mu: U \to \mathbb{R} \) such that the rescaled form \( \mu\omega \) becomes exact:
\[
\mu \omega = dF.
\]
Such a function \( \mu \) is called an \emph{integrating factor}. The method of integrating factors is a standard analytic technique for solving non-exact differential equations. However, the existence of such a \( \mu \) may depend on both local analytic structure and global topological features.

\subsection{de\,Rham Cohomology}

The de\,Rham cohomology group \( H^1_{\mathrm{dR}}(U) \) is defined as the quotient
\[
H^1_{\mathrm{dR}}(U) = \frac{\{ \text{closed 1-forms on } U \}}{\{ \text{exact 1-forms on } U \}}.
\]
If a 1-form \( \omega \) is closed but not exact, then it represents a nontrivial cohomology class \( [\omega] \in H^1_{\mathrm{dR}}(U) \). The vanishing of this class is equivalent to the global solvability of the differential equation.

\subsection{Topological Considerations}

The topology of the domain \( U \) plays a central role in determining whether every closed 1-form is exact. For example, if \( U \) has nontrivial first homology (such as a punctured plane), then there exist closed 1-forms that are not exact. In such settings, cohomology provides a more complete characterization of integrability than local analytic methods.

\section{Methodology}

Our methodology combines geometric reformulation, topological classification, and analytic construction to examine the solvability of first-order ODEs through the lens of exactness and cohomology.

\subsection{Geometric Reformulation of ODEs}
Any first-order ordinary differential equation can be expressed as a 1-form:
\[
\omega = M(x, y)\, dx + N(x, y)\, dy.
\]
This converts the differential equation into a geometric object—specifically, a distribution whose integral curves correspond to solutions. The 1-form $\omega$ is then tested for closedness by computing the exterior derivative:
\[
d\omega = \left( \frac{\partial N}{\partial x} - \frac{\partial M}{\partial y} \right) dx \wedge dy.
\]
If $d\omega = 0$, then $\omega$ is closed.

\subsection{Cohomological Classification}
We analyze the domain $U \subset \mathbb{R}^2$ to determine whether $\omega$ is exact. In simply connected domains, the Poincaré Lemma ensures that closed forms are exact. In multiply connected domains, we assess whether the cohomology class $[\omega] \in H^1_{\mathrm{dR}}(U)$ is trivial:
\[
\omega \text{ is exact} \iff [\omega] = 0 \in H^1_{\mathrm{dR}}(U).
\]
This may involve computing integrals of $\omega$ over closed loops and verifying whether they vanish.

\subsection{Search for Integrating Factors}
If $\omega$ is not exact, we seek a smooth, non-vanishing function $\mu(x, y)$ such that $\mu \omega$ is exact:
\[
d(\mu \omega) = 0.
\]
We employ several strategies:
\begin{itemize}
    \item Searching for $\mu$ depending only on $x$ or $y$.
    \item Solving a PDE derived from the closedness condition for $\mu \omega$.
    \item Using symmetry-based approaches such as the Anco–Bluman adjoint symmetry method.
\end{itemize}
If an integrating factor is found, it reduces the equation to an exact form solvable by direct integration.

\subsection{Construction of Potential Function}
Once an exact form is obtained, either directly or through multiplication by an integrating factor, we construct a potential function $F(x, y)$ such that:
\[
\frac{\partial F}{\partial x} = \mu M, \quad \frac{\partial F}{\partial y} = \mu N.
\]
We check consistency and solve by integrating each component, ensuring compatibility via cross partials.

\subsection{Topological Obstruction Detection}
If no global integrating factor exists and $\omega$ is closed but not exact, we interpret this in terms of cohomological obstruction:
\[
\oint_\gamma \omega \neq 0 \Rightarrow [\omega] \neq 0.
\]
By choosing representative cycles (e.g., the unit circle in a punctured plane), we detect nontrivial cohomology classes preventing global integration.

This approach offers a unified perspective combining geometric intuition, topological invariants, and analytic tools to understand the integrability of first-order ODEs.
\begin{theorem}[Global Solvability via Trivial de\,Rham Class]
Let \( U \subset \mathbb{R}^2 \) be an open, connected (not necessarily simply connected) domain, and let
\[
\omega = M(x, y)\, dx + N(x, y)\, dy
\]
be a smooth, closed 1-form on \( U \), i.e.,
\[
\frac{\partial N}{\partial x} = \frac{\partial M}{\partial y}.
\]
Suppose that the de\,Rham cohomology class \( [\omega] \in H^1_{\mathrm{dR}}(U) \) is trivial (i.e., \( \omega \) is exact).

Then there exists a smooth function \( F : U \to \mathbb{R} \) such that
\[
dF = \omega,
\]
and the general solution to the differential equation
\[
M(x, y)\, dx + N(x, y)\, dy = 0
\]
is given by
\[
F(x, y) = C,
\]
for arbitrary constants \( C \in \mathbb{R} \). Moreover, such a function \( F \) is unique up to an additive constant.
\end{theorem}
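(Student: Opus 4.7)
The plan is to note that almost everything is already bundled into the hypothesis, and then to spend the actual work on describing solutions and justifying uniqueness. The triviality of $[\omega]$ in $H^1_{\mathrm{dR}}(U)$ means, by the very definition of the de\,Rham quotient recalled in Section~2.4, that $\omega$ already sits in the subspace of exact forms; hence a primitive $F$ with $dF = \omega$ exists by definition. For concreteness, however, I would record the explicit construction: fix a basepoint $p_0 \in U$, and for any $p \in U$ set
\[
F(p) \;=\; \int_{p_0}^{p} \omega,
\]
where the integral is taken along any piecewise smooth path from $p_0$ to $p$ (such paths exist because $U$ is open and connected, hence path-connected). The two things to verify here are (i) path-independence, which follows because closedness plus the vanishing of $[\omega]$ guarantees $\oint_\gamma \omega = 0$ on every loop, and (ii) smoothness of $F$ with $dF = \omega$, which is a local computation using the fundamental theorem of calculus on small coordinate segments.

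Next I would address the correspondence between solutions of $M\,dx + N\,dy = 0$ and level sets of $F$. For one direction, if $\gamma(t) = (x(t),y(t))$ is any smooth integral curve of the distribution $\ker \omega$, then
\[
\frac{d}{dt} F(\gamma(t)) \;=\; \bigl(dF\bigr)_{\gamma(t)}\bigl(\gamma'(t)\bigr) \;=\; \omega_{\gamma(t)}\bigl(\gamma'(t)\bigr) \;=\; M x' + N y' \;=\; 0,
\]
so $F$ is constant along $\gamma$. Conversely, on the open set where $\omega$ is nonzero (equivalently $dF \neq 0$), the implicit function theorem promotes each level set $\{F = C\}$ to a smooth curve whose tangent lies in $\ker dF = \ker \omega$, hence is a local solution. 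This justifies writing the general solution in implicit form $F(x,y) = C$.

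For the uniqueness clause, suppose $F_1, F_2 \colon U \to \mathbb{R}$ both satisfy $dF_i = \omega$. Then $d(F_1 - F_2) = 0$, so $F_1 - F_2$ is locally constant; since $U$ is connected, it is globally constant, giving uniqueness up to an additive constant. This step is where connectedness (as opposed to the stronger simple connectedness) is actually used, and it is worth flagging explicitly because the hypothesis on $U$ in the statement is only connectedness.

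Honestly, there is no genuine \emph{hard part} here: the topological content has been absorbed into the assumption $[\omega] = 0$, so the proof is essentially an unpacking of definitions together with the chain rule. The only place where one must be mildly careful is in the path-integral construction of $F$, specifically in checking that any two paths between the same endpoints can be connected through a chain of loops (which is automatic in an open connected subset of $\mathbb{R}^2$, since such sets are locally path-connected and hence path-connected), so that triviality of $[\omega]$ translates into genuine well-definedness of the line integral.
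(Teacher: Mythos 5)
Your proof is correct and follows essentially the same route as the paper: exactness is taken directly from the hypothesis $[\omega]=0$, solutions are identified with level sets of $F$, and uniqueness up to an additive constant follows from $d(F_1-F_2)=0$ together with connectedness of $U$. The extra details you supply --- the path-integral construction of $F$ and the chain-rule verification that $F$ is constant along integral curves of $\ker\omega$ --- are welcome refinements of steps the paper states more loosely (its line ``$dF=0 \Rightarrow F(x,y)=C$'' conflates the vanishing of $\omega$ on tangent vectors to solution curves with the vanishing of $dF$ as a form), but they do not change the underlying argument.
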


\begin{proof}
Since \( \omega \) is a closed 1-form on \( U \), it defines a cohomology class in \( H^1_{\mathrm{dR}}(U) \). The hypothesis \( [\omega] = 0 \) means that \( \omega \) is exact; that is, there exists a smooth function \( F : U \to \mathbb{R} \) such that
\[
\omega = dF.
\]
Substituting this into the differential equation, we have
\[
dF = 0 \quad \Rightarrow \quad F(x, y) = C,
\]
where \( C \) is a constant. Thus, the solution curves lie along level sets of \( F \), which globally solves the equation.

To show uniqueness: if \( \tilde{F} \) is another function satisfying \( d\tilde{F} = \omega \), then
\[
d(F - \tilde{F}) = dF - d\tilde{F} = \omega - \omega = 0,
\]
so \( F - \tilde{F} \) is constant on each connected component of \( U \). Since \( U \) is connected, \( F - \tilde{F} \) must be a global constant. Therefore, \( F \) is unique up to an additive constant.
\end{proof}
\subsection*{Logical Structure of Exactness and Global Solvability}

The following sequence captures the logical implication chain between differential form exactness, cohomology, and solvability of the associated first-order differential equation:
\begin{align*}
    \text{(1) } & \omega = M(x, y)\,dx + N(x, y)\,dy \text{ is a closed 1-form} \\
    \Rightarrow\quad & [\omega] \in H^1_{\text{dR}}(U) \text{ is well-defined} \\
    \Rightarrow\quad & [\omega] = 0 \in H^1_{\text{dR}}(U) \\
    \Rightarrow\quad & \omega \text{ is exact: } \omega = dF \text{ for some } F \in C^1(U) \\
    \Rightarrow\quad & \text{General solution to } M\, dx + N\, dy = 0 \text{ is } F(x, y) = C \text{ (global)} \\
    \Rightarrow\quad & \text{Level curves of } F \text{ give integral curves of the ODE}.
\end{align*}
\begin{remark}
\hfill
\begin{enumerate}
    \item The theorem provides a global criterion for the solvability of first-order differential equations using the machinery of de\,Rham cohomology. While traditional methods (such as integrating factors) often rely on clever local tricks, this approach captures the global topology of the domain.

    \item The assumption that \( \omega \) is closed ensures that a necessary condition for exactness is satisfied. The triviality of the de\,Rham class guarantees that no topological obstructions (e.g., nontrivial loops or holes in the domain) prevent the existence of a global potential function.

    \item The converse is also true: if \( [\omega] \ne 0 \) in \( H^1_{\mathrm{dR}}(U) \), then no globally defined function \( F \) exists such that \( dF = \omega \). In such cases, the differential equation may admit only local solutions or require patching with transition data.

    \item For simply connected domains, \( H^1_{\mathrm{dR}}(U) = 0 \), so every closed 1-form is automatically exact. This recovers the classical result that closed forms in \( \mathbb{R}^2 \) (or any contractible space) are exact.

    \item The theorem is a specific case of the general principle: the vanishing of the cohomology class of a closed \( k \)-form implies the existence of a global \( (k-1) \)-form primitive. For \( k = 1 \), this connects directly to the solvability of first-order ODEs.
\end{enumerate}
\end{remark}
\begin{table}[ht]
\centering
\resizebox{\textwidth}{!}{%
\begin{tabular}{|l|c|c|c|l|}
\hline
\textbf{Space} & \(\pi_1\) (Fundamental Group) & \(H^1_{\text{dR}}\) (de-Rham Cohomology) & Closed 1-Forms Exact? & \textbf{Notes} \\
\hline
\(\mathbb{R}^2\) & \(0\) & \(0\) & Yes (\(\checkmark\)) & Contractible, simply connected \\
\(\mathbb{R}^2 \setminus \{0\}\) & \(\mathbb{Z}\) & \(\mathbb{R}\) & No (\(\times\)) & One hole; closed but not exact forms exist \\
\(\mathbb{RP}^2\) & \(\mathbb{Z}/2\mathbb{Z}\) & \(0\) & Yes (\(\checkmark\)) & Not simply connected, but cohomology is trivial \\
\(T^2\) (2-torus) & \(\mathbb{Z}^2\) & \(\mathbb{R}^2\) & No (\(\times\)) & Two independent cycles generate nontrivial \(H^1\) \\
\(S^2\) & \(0\) & \(0\) & Yes (\(\checkmark\)) & 2-sphere; simply connected and cohomologically trivial \\
\hline
\end{tabular}
}
\caption{Exactness of closed 1-forms on common manifolds and their topological invariants.}
\label{tab:exactness-topology}
\end{table}
\section{Examples and Applications}
\subsection*{Example 1: Exact 1-form on a Simply Connected Domain}
Let \( \omega = (2xy + 1)\,dx + (x^2 + \cos y)\,dy \). 

We compute the partial derivatives:
\[
\frac{\partial}{\partial y}(2xy + 1) = 2x, \quad \frac{\partial}{\partial x}(x^2 + \cos y) = 2x.
\]
Hence, \( \omega \) is closed since the mixed partial derivatives agree. 

The domain is \( \mathbb{R}^2 \), which is simply connected. By the Poincaré Lemma, every closed form is exact. 

We find a potential function \( F \) by integrating:
\[
F(x, y) = \int (2xy + 1)\,dx = x^2y + x + C(y),
\]
\[
\frac{\partial F}{\partial y} = x^2 + C'(y) \stackrel{!}{=} x^2 + \cos y \Rightarrow C'(y) = \cos y.
\]
\[
C(y) = \sin y + C_0.
\]

So,
\[
F(x, y) = x^2y + x + \sin y + C_0.
\]

The general solution is \( F(x, y) = C \).
\subsection*{Example 2: Closed but Non-Exact Form on a Punctured Plane}
Let 
\[
\omega = \frac{-y}{x^2 + y^2} \, dx + \frac{x}{x^2 + y^2} \, dy.
\]

We check that
\[
\frac{\partial}{\partial y} \left( \frac{-y}{x^2 + y^2} \right) = \frac{-x^2 + y^2}{(x^2 + y^2)^2}, \quad
\frac{\partial}{\partial x} \left( \frac{x}{x^2 + y^2} \right) = \frac{y^2 - x^2}{(x^2 + y^2)^2},
\]
so they are equal, and \( d\omega = 0 \Rightarrow \omega \) is closed.

However, if we integrate around the unit circle \( \gamma(t) = (\cos t, \sin t), t \in [0, 2\pi] \), we find:
\[
\oint_\gamma \omega = \int_0^{2\pi} \left( \frac{-\sin t \cdot -\sin t + \cos t \cdot \cos t}{1} \right) dt = \int_0^{2\pi} 1 \, dt = 2\pi.
\]

Therefore, \( \omega \) is not exact since the integral over a closed loop is non-zero.

This shows \( [\omega] \neq 0 \) in \( H^1_{\mathrm{dR}}(\mathbb{R}^2 \setminus \{0\}) \).
\subsection*{Example 3: A Closed and Exact 1-Form on a Non-Simply Connected Domain}

Consider the 1-form
\[
\omega = \frac{x}{x^2 + y^2}\, dx + \frac{y}{x^2 + y^2}\, dy
\]
defined on the punctured plane \( U = \mathbb{R}^2 \setminus \{0\} \).

\subsubsection*{Step 1: Check Closedness}

We denote
\[
M(x, y) = \frac{x}{x^2 + y^2}, \quad N(x, y) = \frac{y}{x^2 + y^2}.
\]
Compute the partial derivatives:
\begin{align*}
\frac{\partial M}{\partial y} &= \frac{-2xy}{(x^2 + y^2)^2}, \\
\frac{\partial N}{\partial x} &= \frac{-2xy}{(x^2 + y^2)^2}.
\end{align*}
Hence,
\[
\frac{\partial N}{\partial x} = \frac{\partial M}{\partial y} \Rightarrow d\omega = 0.
\]
So \( \omega \) is a \textbf{closed 1-form}.

\subsubsection*{Step 2: Find a Potential Function}

Let
\[
F(x, y) = \frac{1}{2} \log(x^2 + y^2).
\]
Then
\begin{align*}
\frac{\partial F}{\partial x} &= \frac{1}{2} \cdot \frac{2x}{x^2 + y^2} = \frac{x}{x^2 + y^2}, \\
\frac{\partial F}{\partial y} &= \frac{1}{2} \cdot \frac{2y}{x^2 + y^2} = \frac{y}{x^2 + y^2}.
\end{align*}
Therefore,
\[
dF = \frac{x}{x^2 + y^2}\, dx + \frac{y}{x^2 + y^2}\, dy = \omega.
\]
Thus, \( \omega \) is \textbf{exact}, and the de\,Rham class \( [\omega] \in H^1_{\mathrm{dR}}(U) \) is trivial.

\subsubsection*{Conclusion}

Although the domain \( U = \mathbb{R}^2 \setminus \{0\} \) is not simply connected, the 1-form \( \omega \) is exact. Hence, the differential equation
\[
\frac{x}{x^2 + y^2}\, dx + \frac{y}{x^2 + y^2}\, dy = 0
\]
is globally solvable on \( U \), with general solution given by
\[
F(x, y) = \frac{1}{2} \log(x^2 + y^2) = C.
\]
\qed
\begin{figure}[h]
 \centering
\includegraphics[width=.6\columnwidth]{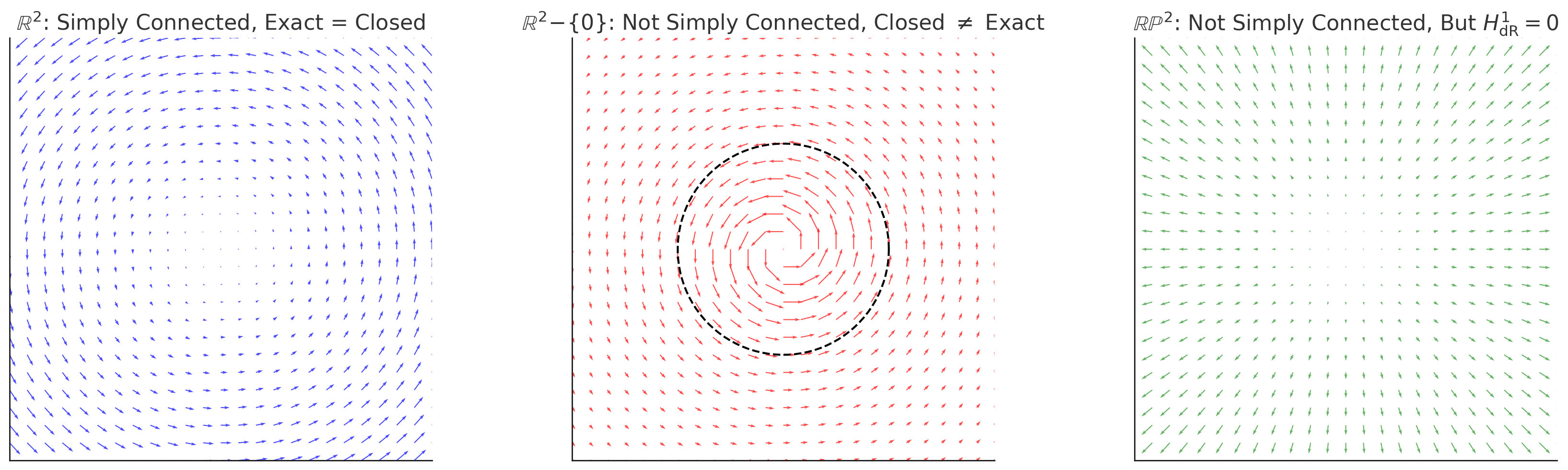}
\caption{Illustration of the relationship between topology and exactness of closed 1-forms.
        \textbf{Left:} $\mathbb{R}^2$ is simply connected; all closed forms are exact.
        \textbf{Middle:} $\mathbb{R}^2 - \{0\}$ is not simply connected; closed forms may not be exact.
        \textbf{Right:} $\mathbb{RP}^2$ is not simply connected but has trivial $H^1_{\text{dR}}$; all closed forms are still exact.}
\label{exactness-topology}
\end{figure}

\section{Conclusion}

We have seen that the integrability of a first-order differential equation is intimately linked to the exactness of its associated 1-form. By recasting the problem in the language of differential forms and de Rham cohomology, we gain a deeper understanding of both local and global aspects of solvability.

Classical methods such as integrating factors, while powerful in many practical settings, are limited by their local nature and cannot detect global topological obstructions. The cohomological framework provides a rigorous method to classify these obstructions and explain the success or failure of traditional techniques.

Moreover, the Anco--Bluman symmetry-based approach, although analytic, complements this topological understanding by systematically generating integrating factors and conserved quantities. When interpreted through the lens of cohomology, it offers a constructive pathway to resolving integrability in many cases.

This synthesis of analysis, geometry, and topology not only deepens our understanding of first-order ODEs but also paves the way for broader applications to partial differential equations, geometric flows, and the topology of dynamical systems. Future work may extend these ideas using sheaf cohomology, secondary calculus, or Hodge theory to explore integrability in higher dimensions and more complex structures.

\bibliographystyle{amsplain}

\begin{thebibliography}{10}

\bibitem{BD} W. E. Boyce and R. C. DiPrima, \textit{Elementary Differential Equations and Boundary Value Problems}, 11th ed., Wiley, 2017.

\bibitem{S} G. F. Simmons, \textit{Differential Equations with Applications and Historical Notes}, McGraw-Hill, 1972.

\bibitem{BT} R. Bott and L. W. Tu, \textit{Differential Forms in Algebraic Topology}, Graduate Texts in Mathematics, vol. 82, Springer, 1982.

\bibitem{W} F. W. Warner, \textit{Foundations of Differentiable Manifolds and Lie Groups}, Graduate Texts in Mathematics, vol. 94, Springer, 1983.

\bibitem{M} S. Morita, \textit{Geometry of Differential Forms}, Amer. Math. Soc., 2001.

\bibitem{BCGGG} R. L. Bryant, S.-S. Chern, R. B. Gardner, H. L. Goldschmidt and P. A. Griffiths, \textit{Exterior Differential Systems}, Springer, 1991.

\bibitem{V} A. M. Vinogradov, \textit{Cohomological Analysis of Partial Differential Equations and Secondary Calculus}, Translations of Mathematical Monographs, vol. 204, Amer. Math. Soc., 2001.

\bibitem{AB} S. C. Anco and G. Bluman, \textit{Integrating factors and first integrals for ordinary differential equations}, European J. Appl. Math. \textbf{9} (1998), no.~3, 245--259.

\bibitem{BA} G. W. Bluman and S. C. Anco, \textit{Symmetry and Integration Methods for Differential Equations}, Applied Mathematical Sciences, vol. 154, Springer, 2002.

\bibitem{N} M. Nakahara, \textit{Geometry, Topology and Physics}, 2nd ed., CRC Press, 2003.

\bibitem{BM} J. C. Baez and J. P. Muniain, \textit{Gauge Fields, Knots and Gravity}, World Scientific, 1994.

\end{thebibliography}

\end{document}